\newtheorem{theorem}{Theorem}
\newtheorem{lemma}[theorem]{Lemma}
\newtheorem{proposition}[theorem]{Proposition}
\newtheorem{corollary}[theorem]{Corollary}
\newtheorem{remark}[theorem]{Remark}
\newcommand{\var}{\mbox{\rm var}}
\def\PP{\ensuremath{\mathbb P}}
\def\EEE{\ensuremath{\mathcal E}}
\def\N{\ensuremath{\mathbb N}}
\def\NN{\ensuremath{\mathbb N}}
\def\es{{\emptyset}}
\title {Return times at periodic points in random dynamics}
\date{\today}
\author{Nicolai Haydn}
\address{Mathematics Department\\ University of Southern California\\
Los Angeles, 90089-1113\\  USA}
\email{\href{mailto:nhaydn@usc.edu}{nhaydn@usc.edu}}
\author{Mike Todd}
\address{Mike Todd\\ Mathematical Institute\\
University of St Andrews\\
North Haugh\\
St Andrews\\
KY16 9SS\\
Scotland} \email{\href{mailto:m.todd@st-andrews.ac.uk}{m.todd@st-andrews.ac.uk}}
\urladdr{\url{http://www.mcs.st-and.ac.uk/~miket/}}
\begin{document}
\maketitle

\begin{abstract}
We prove a quenched limiting law for random measures on subshifts 
at periodic points. We consider a family of measures $\{\mu_\omega\}_{\omega\in\Omega}$, 
where the `driving space' $\Omega$ is equipped with a probability measure
which is invariant under a transformation $\theta$.   
We assume that the fibred measures $\mu_\omega$  satisfy a generalised invariance 
property and are $\psi$-mixing. We then show that for almost every $\omega$
the return times to cylinders $A_n$ at periodic points are in the limit 
compound Poisson distributed for a parameter $\vartheta$ which is given by 
the escape rate at the periodic point.

\end{abstract}

\section{Introduction}
\label{sec:intro}

For sufficiently mixing deterministic dynamical systems the return times to shrinking sets (e.g.~dynamical cylinders) around a typical point in the phase space are in the 
limit exponentially distributed almost surely, as shown in
\cite{Aba06} and~\cite{AS11}. Moreover, for $\phi$-mixing measures it 
follows from~\cite{AV09} that for all non-periodic points one obtains in the
limit the exponential distribution for entry and return times, but 
that at periodic points the limiting return times distribution have
a point mass at the origin. A similar distinction can be drawn for
higher order returns where we know that for $\psi$-mixing
systems return times at periodic points 
are in the limit compound Poisson distributed~\cite{HV09}. 
Assuming the $\phi$-mixing property we can again conclude
 that higher order return times are in the limit Poisson distributed
 (\cite[Corollary~1]{HP14}).
 
 For random, stochastic dynamical systems, following work in \cite{RouSauVar13} for typical points, it was shown in~\cite{RT15} that 
 the entry times distributions at periodic points show similar behaviour as in
 the deterministic setting if one considers a quenched limit  (the annealed result then follows easily). In this case 
 the limiting distribution has a point mass at the origin and is otherwise 
 exponential. The relative weight of the point mass is determined by the marginal measure
 and applies to almost all realisations of the random dynamics. It is assumed that the marginal
 measures are $\psi$-mixing. In the present paper we consider the same 
 setting and prove that higher order return times at periodic points
 are compound Poisson distributed. Again the parameter for the 
 compound Poissonian is entirely determined by the marginal measure
 and applies to almost all realisations.
 
 The perspective taken here and in the works discussed above is to see the system via dynamically defined cylinder sets, which makes it essentially a `symbolic approach'.  We note that outside this context (for example for an interval map, considering hits to balls rather than cylinders), much less is known in the random setting.  However, in \cite{AytFreVai14}, a Poisson distribution was shown for first hitting times to balls in the setting of certain random dynamical systems.  We note that this was for systems which were all close to a certain well-behaved system, so the randomness could be interpreted as (additive) noise.  Moreover, this was an annealed law rather than a quenched one.

 \subsection{Setting and conditions}

For a measure space $\Omega$, let $\theta:\Omega\to \Omega$ be an invertible transformation preserving an ergodic probability measure  $\PP$.  Set $\Sigma=\NN^{\NN_0}$ and let
$\sigma: \Sigma \to \Sigma$ denote the shift.
 Let $A=\left\{A(\omega)=(a_{ij}(\omega)):\omega\in \Omega\right\}$ be a random transition matrix, i.e., for any $\omega\in\Omega$, $A(\omega)$ is a $\N\times \N$-matrix with entries in $\{0,1\}$ such that $\omega\mapsto a_{ij}(\omega)$ is measurable for any $i\in\NN$ and $j\in\NN$.  
For any $\omega\in \Omega$
we write
$$
\Sigma_\omega =\{x=(x_0,x_1,\ldots)\colon x_i\in \N \text{ and } a_{x_i x_{i+1}}(\theta^i\omega)=1\text{ for all } i\in\NN\}
$$
for the fibre over the point $\omega$ in the `driving space' $\Omega$.
Moreover
$$
\EEE = \{(\omega,x)\colon \omega\in\Omega,x\in \Sigma_\omega\} \subset \Omega\times \Sigma
$$
denotes the full space on which the random system is described through a skew action.
Indeed, the random dynamical system coded by the skew-product $S : \EEE \to \EEE$ given by
$S(\omega,x)= (\theta \omega,\sigma x)$.
While we allow infinite alphabets here, we nevertheless call $S$ a \emph{random subshift of finite type} (SFT).  Assume that $\nu$ is an $S$-invariant probability
measure with marginal $\PP$ on $\Omega$.  Then we let $(\mu_\omega)_\omega$ denote
its decomposition on $\Sigma_\omega$ (see \cite[Section 1.4]{Arn98}), that is, $d\nu(\omega,x)=d\mu_\omega(x)d\PP(\omega)$. 
The measures $\mu_\omega$ are called the \emph{sample measures}.  Observe that $\mu_\omega(A)=0$ if $A\cap \Sigma_\omega=\es$. We denote by $\mu=\int \mu_\omega \, d\PP$ the marginal of $\nu$ on $\Sigma$.  We note that we may replace the assumption of invertibility of $\theta$ by assuming the existence of sample and marginal measures as above.

We also identify our alphabet $\mathcal{A}$ with the partition given by $1$-cylinders 
$U(a)=\{x\in\Sigma: x_0=a\}$. The elements of the $k$th join 
$\mathcal{A}^k=\bigvee_{j=0}^{k-1}\sigma^{-j}\mathcal{A}$, $k=1,2,\dots$ are called
 {\em $k$-cylinders}. Put $\mathcal{A}^*$ for the forward sigma-algebra
generated by $\bigcup_{j\ge1}\mathcal{A}^j$. The length $|A|$ of a cylinder set $A$ is
determined by $|A|=k$ where $k$ is so that $A\in\mathcal{A}^k$.
Note that $\mathcal{A}$ is generating, i.e.\ that the atoms of 
$\mathcal{A}^\infty$ are single points.
If we denote by $\chi_A$ the characteristic function of a (measurable) set $A\subset\Sigma$
 then we can define the counting function
$$
\zeta_A(z)=\sum_{j=1}^{N}\chi_A\circ \sigma^j(z),
$$
$z\in \Sigma$, where $N$ is the observation time given by the invariant annealed measure
$\mu$. To wit $N=[t/\mu(A)]$ for $t>0$ a parameter.
The value of $\zeta_A$ counts the number of times a given point returns to $A$ within the time
$N$. 

Let us make the following assumptions: \\

\noindent (i) The measures $\mu_\omega$ are $\psi$-mixing: There exists a decreasing function
$\psi:\mathbb{N}\to[0,\infty)$ (independent of $\omega$) so that 
$$
\left|\mu_\omega(A\cap\sigma^{-n-k}B)-\mu_\omega(A)\mu_{\theta^{n+k}\omega}(B)\right|
\le\psi(k)\mu_\omega(A)\mu_{\theta^{n+k}\omega}(B)
$$ 
for all $A\in\sigma(\mathcal{A}^n)$ and $B\in\mathcal{A}^*$.\\
(ii) The marginal measure $\mu$ satisfies the $\alpha$-mixing property:
$$
\left|\mu(A\cap \sigma^{-n-k}B)-\mu(A)\mu(B)\right|\le \psi(k)
$$ 
for all $A\in\sigma(\mathcal{A}^n)$ and $B\in\mathcal{A}^*$.\\
(iii) There exist $0<\eta_0<1$ so that $\eta_0^n\le\mu(A)$ for 
all $A\in\mathcal{A}^n$, all $\omega$ and all large $n$.\\
(iv) 
$$
\sup_\omega\sup_{A\in\mathcal{A}}\mu_\omega(A)< 1.
$$

\vspace{3mm}

\noindent  Our main result, Theorem~\ref{psi-mixing}, is that under these conditions, the return times at periodic points $x$ are 
compound Poissonian provided the limit $\vartheta(x)=\lim_{n\to\infty}\frac{\mu(A_{n+m}(x))}{\mu(A_n(x))}$
exists, where $m$ is the minimal period of $x$ and $A_n(x)\in\mathcal{A}^n$ denotes the
$n$-cylinder that contains $x$.
To be more precise, if we denote by $\zeta_n^x$  the counting function
$$
\zeta_n^x(z)=\sum_{j=1}^{N_n} \chi_{A_n(x)}\circ \sigma^j(z)
$$ 
with the {\em observation time}
$N_n=\left[\frac{t}{\mu(A_n(x))}\right]$ ($t>0$ is a parameter), then we will show that
$\mu_\omega\!\left(\zeta_n^x=r\right), r=0,1,2,\dots$, converges to the Polya-Aeppli
distribution as $n\to\infty$ for $\mathbb{P}$-almost every $\omega$.

The first such result was by Hirata~\cite{Hirata1} for the first entry time for Axiom~A systems.
For random systems satisfying  assumptions~(i)--(iv) a similar result was then shown
by Rousseau and Todd~\cite{RT15} for the first entry time distribution in the quenched case. 
  Note that, as mentioned above, for systems perturbed by additive noise, which are a particular case of our systems here, an annealed version of this result is proved in  \cite{AytFreVai14}.  The additivity `washes out' any periodic behaviour.

As in \cite{RT15}, if we wish to consider shifts on countable alphabets, it is no longer reasonable to assume condition (iii), but if we drop this and strengthen condition (ii) to the assumption of the $\psi$-mixing property for $\mu$, then our results still follow.  We close this section by noting that the conditions on our systems here are the same as those in  \cite{RT15}, so the main result here also applies to all the applications given there.

\subsection{Structure of the paper}

 In Section~\ref{factorial.moments} we describe the compound Poisson
 distribution and state an auxiliary limiting result on which the proof of 
 the main result is based. The main part of the proof of the main result
 consists of estimating the contributions made by short returns which 
 are outside the periodicity of the periodic point and for which the 
 mixing property cannot be well applied. This is done in Section~\ref{rare.sets}.
 The compound part is 
 determined by the periodic behaviour near the periodic point where
 it generates geometrically distributed immediate returns. For the long returns
 the mixing property comes into play and results in exponentially distributed
 returns between clusters of short returns whose numbers are in the limit
 geometrically distributed.
 We state our main result in Section~\ref{distribution} and provide examples in
 Section~\ref{examples}.

\textit{Acknowledgements.}  This work was begun at the Conference on Extreme Value Theory and Laws of Rare Events at Centre International de Rencontres Math\'ematiques (CIRM), Luminy and part of it continued at the American Institute of Mathematics (AIM).  The authors thank both institutions for their hospitality.

\section{Factorial moments and a limiting result}\label{factorial.moments}
This section is used to recall a result on the approximation of the compound 
Poisson distribution with a geometric distribution, i.e.\ the Polya-Aeppli 
distribution. More general compound Poisson distributions were considered in~\cite{Fel}
and more  recently (e.g.~\cite{CR,BCL}) there have
been efforts to approach compound Poisson distributions using the Chen-Stein method.
Although the treatment in~\cite{CR} applies to more general setting, the result is far from 
applicable to our situation. Proposition~\ref{sevastyanov} is the compound 
analogue of other theorems for the plain  Poisson distribution as for instance in~\cite{Sev,HV}.

\subsection{Compound Poisson distribution}

For a parameter $p\in[0,1)$ define the polynomials
$$
P_r(t,p)=\sum_{j=1}^rp^{r-j}(1-p)^j\frac{t^j}{j!}\binom{r-1}{j-1},
$$
$r=1,2,\dots$, where $P_0=1$ ($r=0$). The distribution $e^{-t}P_r(t,p)$, $r=0,1,2,\dots$ 
is  the {\em P\'olya-Aeppli distribution}~\cite{JKW} which has the generating function
$$
g_p(z)=e^{-t}\sum_{r=0}^\infty z^rP_r=e^{t\frac{z-1}{1-pz}}.
$$
It has mean $\frac{t}{1-p}$ and variance $t\frac{1+p}{(1-p)^2}$. 
For $p=0$: $e^{-t}P_r(t,0)=e^{-t}\frac{t^r}{r!}$ and one recovers the Poisson distribution whose 
 generating function $g_0(z)=e^{t(z-1)}$ is analytic in the entire plane whereas for $p>0$ the generating function $g_p(z)$ has an essential singularity at $\frac1p$. 
 The expansion at $z_0=1$ yields
$g_p(z)=\sum_{k=0}^\infty (z-1)^kQ_k$ where
$$
Q_k(t,p)=\frac1{(1-p)^k}\sum_{j=1}^kp^{k-j}\frac{t^j}{j!}\binom{k-1}{j-1}
$$
($Q_0=1$) are the factorial moments. 

\subsection{Return times patterns}\label{returntimespatterns}
Let $M$ and $m<M$ be given integers (typically $m<\!\!<M$) and let $N\in\mathbb{N}$
 be some (large) number. For $r=1,2,3,\dots$ we define the following:\\
{\bf (I)} $G_r(N)$: We denote by $G_r(N)$ the simplex of $r$-vectors
$\vec{v}=(v_1,\dots,v_r)\in\mathbb{N}^r$ for which $1\leq v_1<v_2<\cdots<v_r\leq N$.\\
{\bf (II)} $G_{r,j}(N)$: We write $G_r$ as the disjoint union $\bigcup_{j}G_{r,j}$
where $G_{r,j}$ consists of all $\vec{v}\in G_r$ for which we can find $j$ indices
$i_1,i_2,\dots,i_j\in\{1,2,\dots,r\}$, $i_1=1$, so that $v_k-v_{k-1}\le M$ if
$k\not=i_2,\dots,i_j$ and so that $v_k-v_{k-1}>M$ for all $k=i_2,\dots,i_j$. 

For $\vec{v}\in G_{r,j}$ the values of 
$v_i$ will be identified with returns; returns that occur within less than time $M$ are called
{\em immediate returns} and if the return time is $\ge M$ then we call it a {\em long return}
(i.e.\ if $v_{i+1}-v_i<M$ then we say $v_{i+1}$ is an immediate return and if $v_{i+1}-v_i\ge M$
the we call $v_{i+1}$ a long return). 
That means that  $G_{r,j}$ consists of all return time patterns $\vec{v}$ which have $r-j$
immediate returns that are clustered into $j$ blocks of immediate returns and $j-1$ long returns between those blocks. The entries $v_{i_k}$, $k=1,\dots,j$, are the beginnings (heads) of the blocks
(of immediate returns). We assume from now on that all short returns are multiples of $m$. 
(This reflects the periodic structure around periodic points as in condition~(b) of 
Proposition~\ref{sevastyanov}.)\\
{\bf (III)} $G_{r,j,w}(N)$:  For $\vec{v}\in G_{r,j}$ the length of each block is 
$v_{i_{k+1}-1}-v_{i_k}$, $k=1,\dots,j-1$. Consequently 
let us put $w_k=\frac1m(v_k-v_{k-1})$ for the {\em individual overlaps}, for $k\not=i_1,i_2,\dots,i_j$. Then
$\sum_{\ell=i_k+1}^{i_{k+1}-1}w_\ell=\frac1m(v_{i_{k+1}-1}-v_{i_k})$ is the {\em overlap} of the
 $k$th block and
$w=w(\vec{v})=\sum_{k\not=i_1,i_2,\dots,i_j}w_k$ the {\em total overlap} of $\vec{v}$.
If we put
$G_{r,j,w}=\{\vec{v}\in G_{r,j}: w(\vec{v})=w\}$ then $G_{r,j}$ is the disjoint union $\bigcup_{w}G_{r,j,w}$.\\
{\bf (IV)} $\Delta(\vec{v})$: For $\vec{v}$ in $G_{r,j}$ we put
$$
\Delta(\vec{v})=\min\left\{v_{i_k}-v_{i_k-1}:\;k=2,\dots,j\right\}
$$
for the minimal distance between the `tail' and the `head' of successive blocks of immediate returns.

\subsection{Compound Poisson approximations}

We shall use the following result:

\begin{proposition}\label{sevastyanov} Let $m$ be as above and assume that 
there are sequences $\{M(n):n\},\{N(n):n\}$ and $0,1$-valued random variables
 $\rho_{j,n}$ for  $j=1,\dots,N(n)$ on some $\Sigma$. 
For $\vec{v}\in G_r(n)$ put $\rho_{\vec{v}}=\prod_i\rho_{v_i,n}$.
Choose $\delta(n)>0$ and define the `rare set' $R_r(n)=\bigcup_{j=1}^rR_{r,j}$,
 where $R_{r,j}=\{\vec{v}\in G_{r,j}: \Delta(\vec{v})<\delta\}$.
Let $\mu$ be a probability measure on $\Sigma$ which satisfies the following conditions: \\
(a)
$$
\sum_{\vec{v}\in G_r\setminus R_r}\mu(\rho_{\vec{v}})
\to Q_r(t,p)
$$
as $n\to\infty$ for some $p\in(0,1]$.\\
(b) 
$$
\sum_{\vec{v}\in R_r}\mu(\rho_{\vec{v}})
\to 0 
$$
as $n\to\infty$.

Then for every $r$
$$
\mu(\zeta_n=r)\to e^{-t}P_r(t,p)
$$
as $n\to\infty$, where $\zeta_n=\sum_{j=1}^{N(n)}\rho_{j,n}$.
\end{proposition}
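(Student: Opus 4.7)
\textit{Proof proposal.} The plan is to apply the method of factorial moments. Since each $\rho_{j,n}$ takes values in $\{0,1\}$, for every $r\ge 1$ the combinatorial identity
\[
\sum_{\vec v\in G_r(n)}\rho_{\vec v}=\binom{\zeta_n}{r}
\]
holds pointwise, with the convention that the right-hand side vanishes when $\zeta_n<r$. Integrating and splitting the sum over $G_r$ according to whether $\vec v\in R_r$ or not, hypotheses~(a) and~(b) immediately give
\[
\mu\!\left(\binom{\zeta_n}{r}\right)=\sum_{\vec v\in G_r\setminus R_r}\mu(\rho_{\vec v})+\sum_{\vec v\in R_r}\mu(\rho_{\vec v})\xrightarrow[n\to\infty]{}Q_r(t,p)
\]
for each $r\ge 1$ (and trivially for $r=0$).

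Next I identify the limits $Q_r(t,p)$ as factorial moments of the P\'olya-Aeppli law. This is built into the expansion $g_p(z)=\sum_{k\ge 0}(z-1)^kQ_k(t,p)$ recalled earlier: differentiating $r$ times and evaluating at $z=1$ gives $g_p^{(r)}(1)=r!\,Q_r(t,p)$, which is exactly the $r$th factorial moment of the distribution whose probability generating function is $g_p$. Thus the previous step shows that the factorial moments of $\zeta_n$ converge to those of P\'olya-Aeppli.

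To conclude I invoke the method of moments. As the P\'olya-Aeppli distribution is a compound Poisson with shifted geometric summands, $g_p(z)$ is analytic on the disc $\{|z|<1/p\}$, which contains a neighbourhood of $1$. Hence its moment generating function is finite near the origin, Carleman's condition is satisfied, and the P\'olya-Aeppli law is uniquely determined by its moments. For $\nats$-valued random variables the method-of-moments theorem then gives that convergence of the factorial moments of $\zeta_n$ to those of P\'olya-Aeppli forces the pointwise convergence $\mu(\zeta_n=r)\to e^{-t}P_r(t,p)$ for every $r$.

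The main obstacle is making this last implication quantitative. The cleanest route is the inversion formula
\[
\mu(\zeta_n=r)=\sum_{k\ge r}(-1)^{k-r}\binom{k}{r}\mu\!\left(\binom{\zeta_n}{k}\right),
\]
whose finite sections converge termwise by the previous step; what needs to be controlled is the tail in $k$. This can be handled either by truncating $\zeta_n$ at a large $K$ and bounding the contribution of $\{\zeta_n\ge K\}$ via Markov applied to a higher factorial moment (uniformly bounded in $n$ by the already established convergence to $Q_K$), or by extracting a subsequential weak limit via the tightness that follows from $\mu(\zeta_n)\to Q_1(t,p)$ and identifying it by moment-determinacy. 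Either implementation is standard, and the substantive work of the paper consists in verifying the two hypotheses~(a) and~(b) in the concrete setting of periodic points for random subshifts.
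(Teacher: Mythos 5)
Your proposal is correct and follows essentially the same route as the paper: the identity $\sum_{\vec v\in G_r}\rho_{\vec v}=\binom{\zeta_n}{r}$, the split over $R_r$ and its complement using (a) and (b), and then the method of moments for $\mathbb{N}_0$-valued variables, which is exactly what the paper's two-line proof does by citing Billingsley. You additionally make explicit the points the paper leaves implicit --- that the $Q_r$ are the binomial (not falling-factorial) moments read off from $g_p(z)=\sum_k(z-1)^kQ_k$, that analyticity of $g_p$ on $\{|z|<1/p\}$ gives moment-determinacy, and that the tail of the inversion series needs truncation or a tightness/subsequence argument --- all of which is standard and correct.
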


\begin{proof} 
The result follows by the moment method (see for example \cite[Section 30]{Bil12}) that 
$\mu(\zeta_n^{(r)})$ converges to $Q_r(t,p)$ for each $r$, 
where $\zeta_n^{(r)}=\zeta_n(\zeta_n-1)\cdots(\zeta_n-r+1)$ is the factorial
moment. Since $\mu(\zeta_n^{(r)})=\sum_{\vec{v}\in G_r}\mu(\rho_{\vec{v}})$
we obtain by  assumptions~(a) and~(b) that
$$
\mu(\zeta_n^{(r)})=\sum_{\vec{v}\in G_r\setminus R_r}\mu(\rho_{\vec{v}})
+\sum_{\vec{v}\in  R_r}\mu(\rho_{\vec{v}})\to Q_r(t,p)
$$
since the second term goes to zero and the first term converges to $Q_r$.
\end{proof}

\vspace{3mm}

\noindent In the following we will apply this proposition to situations that typically
arise in dynamical systems.  There the stationarity condition~(a) of the proposition
is implied by the invariance of the measure. The random variables $\rho_j$ will be
the indicator function of a cylinder set pulled back under the $j$th iterate of the map. 
Condition~(a) is then implied by the mixing property.
The more difficult condition to satisfy is~(b) because it involves `short range' interaction
over which one has little control and which requires more delicate estimates
(see Lemma~\ref{R.small} below).

\section{$\psi$-mixing measures and the rare set}\label{rare.sets}


In this section we only assume Assumption~(i), that is the measures $\mu_\omega$ are 
$\psi$-mixing i.e.\ satisfy
$$
\left|\mu_\omega(U\cap \sigma^{-m-n}V)-\mu_\omega(U)\mu_{\theta^{m+n}\omega}(V)\right|\leq\psi(m)\mu_\omega(U)\mu_{\theta^{m+n}\omega}(V)
$$
for all $U\in\sigma(\mathcal{A}^n)$, $V\in\sigma(\mathcal{A}^*)$ and for all $m, n\ge0$,
where $\psi(m)\to0$ (and $\psi$ is independent of $\omega$).

 For instance  equilibrium states for H\"older continuous
potentials on Axiom A systems (which include subshifts of finite type) or on the 
Julia set of hyperbolic rational maps are $\psi$-mixing~\cite{DU1}.

\vspace{2mm}

\noindent For $r\geq1$ and (large) $\tau\in\mathbb{N}$ let as above $G_r(N)$ be the $r$-vectors
$\vec{v}=(v_1,\dots,v_r)\in\mathbb{Z}^r$ for which $1\leq v_1<v_2<\cdots<v_r\leq N$. 
 Let $t$ be a positive parameter, $W\subset \Sigma$ and put $\tau=[t/\mu(W)]$ be the normalised time. 
 Then the
entries $v_j$ of the vector $\vec{v}\in G_r(N)$ are the times at which all the points in
$C_{\vec{v}}=\bigcap_{j=1}^r \sigma^{-v_j}W$ hit the set $W$ during the time interval $[1,N]$. 
 The following lemma, a random version of \cite[Lemma 4]{HV},  is immediate.
\begin{lemma}\label{product.mixing}
Let $(\sigma,\mu_\omega)$ be $\psi$-mixing. 
For  $r>1$ let $n_i\ge1, i=1,\dots,r-1$, be given numbers and $\vec{n}=(n_1,  \ldots, n_{r-1})$. Let $W_i\in\sigma(\mathcal{A}^{n_i})$
and assume that $\vec{v}\in G_r(N)$ is such that $v_{i+1}-v_i\geq n_i$ ($i=1,\dots,r-1$).
Then
$$
\left|\frac{\mu_\omega\left(\bigcap_{i=1}^r\sigma^{-v_i}W_i\right)} 
{\prod_{i=1}^r\mu_{\theta^{v_i}\omega}(W_i)}-1\right|
\leq(1+\psi(d(\vec{v},\vec{n})))^{r-1}-1,
$$
and $d(\vec{v},\vec{n})=\min_{1\le i\le r-1}(v_{i+1}-v_i-n_i)$.
\end{lemma}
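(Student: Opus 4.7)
The natural approach is induction on $r$, peeling off one factor at a time using $\psi$-mixing. The base case $r=1$ is trivial since the left-hand side is zero and $(1+\psi)^0-1=0$.

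For the inductive step, fix $r \ge 2$ and set $A = \bigcap_{i=1}^{r-1} \sigma^{-v_i} W_i$. Since each $W_i \in \sigma(\mathcal{A}^{n_i})$ and pulling back by $\sigma^{-v_i}$ makes $\sigma^{-v_i} W_i$ depend only on coordinates $v_i, \dots, v_i + n_i - 1$, the set $A$ lies in $\sigma(\mathcal{A}^{v_{r-1} + n_{r-1}})$. Writing the remaining factor as $\sigma^{-v_r} W_r = \sigma^{-(n+k)} W_r$ with $n = v_{r-1} + n_{r-1}$ and $k = v_r - v_{r-1} - n_{r-1} \ge d(\vec{v}, \vec{n})$, Assumption~(i) gives
$$
\mu_\omega\left(A \cap \sigma^{-v_r} W_r\right) = \mu_\omega(A)\,\mu_{\theta^{v_r}\omega}(W_r)(1 + \varepsilon_r),
$$
with $|\varepsilon_r| \le \psi(k) \le \psi(d(\vec{v}, \vec{n}))$, using that $\psi$ is decreasing.

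Apply the inductive hypothesis to $\mu_\omega(A)$ with the truncated vectors $(v_1, \dots, v_{r-1})$ and $(n_1, \dots, n_{r-2})$. The corresponding minimal gap is at least $d(\vec{v}, \vec{n})$, so
$$
\mu_\omega(A) = \prod_{i=1}^{r-1} \mu_{\theta^{v_i}\omega}(W_i) \cdot (1 + \eta), \qquad |1 + \eta| \le (1 + \psi(d(\vec{v}, \vec{n})))^{r-2}.
$$
Multiplying and subtracting $1$ yields
$$
\left|\frac{\mu_\omega\left(\bigcap_{i=1}^r \sigma^{-v_i} W_i\right)}{\prod_{i=1}^r \mu_{\theta^{v_i}\omega}(W_i)} - 1\right|
= |(1+\eta)(1+\varepsilon_r) - 1| \le (1 + \psi(d(\vec{v}, \vec{n})))^{r-1} - 1,
$$
closing the induction.

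There is no real obstacle here: the only small point to get right is that the partial intersection $A$ lies in the correct sigma-algebra $\sigma(\mathcal{A}^{v_{r-1}+n_{r-1}})$, so that the assumed form of $\psi$-mixing applies with $n = v_{r-1}+n_{r-1}$, and that the gap $k = v_r - v_{r-1} - n_{r-1}$ is bounded below by $d(\vec{v},\vec{n})$, allowing monotonicity of $\psi$ to be invoked. Everything else is a clean multiplicative telescoping.
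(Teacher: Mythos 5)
Your induction is correct and is precisely the telescoping argument the paper has in mind when it declares the lemma ``immediate'' as a random version of \cite[Lemma 4]{HV}: peel off the last factor via the $\psi$-mixing inequality, using that the partial intersection lies in $\sigma(\mathcal{A}^{v_{r-1}+n_{r-1}})$ and that the remaining gap $v_r-v_{r-1}-n_{r-1}$ dominates $d(\vec{v},\vec{n})$, then multiply the error factors. The one step worth making explicit is the base case: the identity $\mu_\omega(\sigma^{-v_1}W_1)=\mu_{\theta^{v_1}\omega}(W_1)$ is not a consequence of $\psi$-mixing (which would only give it up to a factor $1+\psi(v_1)$) but of the equivariance $\sigma_*\mu_\omega=\mu_{\theta\omega}$ supplied by the $S$-invariance of $\nu$ for $\PP$-a.e.\ $\omega$, and without it the denominators $\mu_{\theta^{v_i}\omega}(W_i)$ in the statement would not match the factors produced by the mixing step.
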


\begin{remark}
As in \cite[Lemma 2.1]{RT15}, and similarly to the above lemma, under conditions (i) and (iv) there exists  $0<\eta_1<1$ so that for all large $n$
$$\mu_\omega(A)\le \eta_1^n$$ for 
all $A\in\mathcal{A}^n$, and $\PP$-a.e. $\omega$.
\label{rmk:eta}
\end{remark}

\subsection{Estimate of the rare set}\label{section.return.times}
Next we will estimate the size of the rare set. 
As before we put $C_{\vec{v}}=\bigcap_{k=1}^r\sigma^{-v_k}W$  for $\vec{v}\in G_r(N)$  where $W\in\sigma(\mathcal{A}^n)$ for some $n$. Let $\delta\geq0$ and put 
$$
R_{r,j}(N)=\{\vec{v}\in G_{r,j}(N): \min_k(v_{i_k+1}-v_{i_k}-n)<\delta\},
$$
where the values $v_{i_1},\dots,v_{i_j}$ are the beginnings of the $j$ blocks of
immediate returns (notation as in section~\ref{returntimespatterns}~(II)).
Then we put $R_r=\bigcup_jR_{r,j}$.

\begin{lemma}\label{R.small} 
Let the class of measures $\mu_\omega$ be $\psi$-mixing.  Let $\{A_n\in \mathcal{A}_n:n\}$ 
be a sequence of cylinders and $\{M_n<n:n\}$ a sequence of integers so that for all large $n$,  $A_n\cap\sigma^{-\ell}A_n\not=\emptyset$ for
$\ell<M=M_n$ implies that $\ell$ is a multiple of some given integer $m$.

Then there exists a constant $K_1$ so that
$$
\sum_{\vec{v}\in R_r} \mu_\omega(C_{\vec{v}}) \leq K_1\gamma^{r-1}\sum_{j=2}^r\sum_{s=1}^{j-1}
\binom{j-1}{s-1} (\delta\eta_1^M)^{j-s}\frac{\mu_\omega(\zeta_n^x)^s}{s!}
\binom{r-1}{j-1}(\alpha'\eta_1^m)^{r-j},
$$
for $\delta=\delta_n>n$ and $R_r$ as above, where $C_{\vec{v}}=\bigcap_{k=1}^r\sigma^{-v_k}A_n$,  $\gamma=1+\psi(\delta-n)$ 
and $\alpha'>1+\psi(0)$.
\end{lemma}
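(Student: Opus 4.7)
The plan is to bound $\sum_{\vec v \in R_r} \mu_\omega(C_{\vec v})$ by classifying each rare pattern $\vec v$ according to its block structure (number of blocks $j$) and, more finely, according to how many of the $j-1$ long returns are \emph{truly long} (the mixing displacement across them exceeds $\delta$, so Lemma~\ref{product.mixing} applies efficiently) versus \emph{barely long} (mixing displacement less than $\delta$, which is what places $\vec v$ in $R_r$). Writing $s-1$ for the number of truly-long returns, so that $s$ counts the number of \emph{groups} of blocks joined only by barely-long returns, membership in $R_r$ forces $s \le j-1$. This yields the outer double sum over $j \in \{2,\ldots,r\}$ and $s \in \{1,\ldots,j-1\}$ together with the combinatorial factors $\binom{r-1}{j-1}$ for choosing which gaps are long and $\binom{j-1}{s-1}$ for choosing which long gaps are truly long.

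For a pattern in such a class, I would apply $\psi$-mixing across the $s-1$ truly-long gaps via Lemma~\ref{product.mixing}. Each such gap provides a mixing displacement of at least $\delta - n$ between the adjacent cylinders, so the product estimate there produces
$$
\mu_\omega(C_{\vec v}) \le \gamma^{s-1} \prod_{g=1}^{s} \mu_\omega(C_{\vec v, g}),
$$
where $C_{\vec v, g}$ is the portion of the intersection corresponding to the $g$-th group. This is where the global prefactor $\gamma^{r-1}$ enters, absorbing the $s-1$ mixing errors together with the ones used below on the immediate returns.

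The remaining work is to bound each group's contribution, where blocks are joined by barely-long returns and, within each block, by immediate returns (all multiples of $m$ by hypothesis on $A_n$ and $M$). For each immediate return I would use the same type of estimate: the overlap structure forces $A_n$ to be extended by a fresh cylinder of length a positive multiple of $m$, whose $\mu_\omega$-measure is at most $\eta_1^m$ by Remark~\ref{rmk:eta}, with a one-step mixing loss accounted for by $\alpha' > 1+\psi(0)$; this yields the factor $(\alpha' \eta_1^m)^{r-j}$. For each of the $j-s$ barely-long gaps within a group, mixing gives no help, so I would sum crudely over positions: the head of the next block lies in a window of length at most $\delta$ past the tail of the preceding block, contributing a combinatorial factor $\delta$, and the new length-$n$ cylinder contributes at most $\eta_1^n \le \eta_1^M$ (since $M < n$); this gives the factor $(\delta \eta_1^M)^{j-s}$.

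Finally, I would sum over the positions of the $s$ group heads in $\{1,\ldots,N_n\}$. Because $\mu_\omega(\sigma^{-k} A_n(x)) = \mu_{\theta^k\omega}(A_n(x))$ and $\mu_\omega(\zeta_n^x) = \sum_{k=1}^{N_n} \mu_{\theta^k\omega}(A_n(x))$, this unordered sum over $s$-tuples contributes at most $\mu_\omega(\zeta_n^x)^s / s!$. Combining this with the combinatorial, measure, and mixing-error factors above and summing over $j$ and $s$ produces the stated bound, with $K_1$ absorbing absolute constants. The main obstacle is the treatment of the barely-long gaps: $\psi$-mixing across them is too weak for a clean factorisation, so the argument rests on the rough positional bound $\delta \eta_1^M$, which will only be useful once, later in the paper, $\delta$ is chosen to grow sub-exponentially compared with $\eta_1^{-M}$.
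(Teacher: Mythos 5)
Your proposal is correct and follows essentially the same route as the paper: the same $(j,s)$ decomposition into blocks and truly-long versus barely-long gaps, the same combinatorial factors $\binom{r-1}{j-1}\binom{j-1}{s-1}\delta^{j-s}$, mixing across the $\ge\delta$ gaps giving $\gamma^{s-1}$, the periodicity plus Remark~\ref{rmk:eta} bound $(\alpha'\eta_1^m)^{r-j}$ on the immediate returns, the crude $(\delta\eta_1^M)^{j-s}$ bound on the intermediate gaps, and the $\mu_\omega(\zeta_n^x)^s/s!$ positional sum over group representatives. The only cosmetic differences are that the paper organises the immediate-return contribution by total overlap $u$ with a stars-and-bars coefficient $\binom{u-1}{r-j-1}$ before resumming the geometric series, and that at a barely-long gap (which may be shorter than $n$, so the two $n$-cylinders overlap) one must truncate the \emph{earlier} cylinder to length $M$ and keep the later one at full length, so the available factor is $\eta_1^M$ rather than your claimed $\eta_1^n$ --- which is exactly what the stated bound needs and changes nothing in the conclusion.
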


\begin{proof} Put $R_{r,j}^s$ for those $\vec{v}\in R_{r,j}$ for which $v_{i+1}-v_i\geq\delta$ for
$s-1$ indices $i_1, \dots, i_{s-1}$ and $i_s=r$ ($s\le j-1$) indicate the tails of the blocks
which are followed by a large gap. Similarly we put $R_{r,j,u}^s$ for the set $R_{r,j}^s\cap R_{r,j,u}$.
 We consider two separate cases: (A) $s\geq2$ and (B) $s=1$. 

\noindent {\bf (A)} Assume  $s\geq2$ and $i'_1, i'_2, \dots, i'_{j}$ be the $j$ tails of 
blocks ($i'_j=r$) which are characterised by $v_{i'_k+1}-v_{i'_k}\geq M$ for $k=1,\dots,j-1$
 (and $v_{i'_j}=v_r$). We have $\{i_k:k\}\subset\{i'_k:k\}$ where the $j-s$ many indices
 in $\{i'_k:k\}\setminus\{i_k:k\}$ mark the gaps which are $\geq M$ and smaller than $\delta$.
 Moreover, the remaining  $r-j$ return times are immediate short returns of lengths $\in[m,M)$. 
 Let us put
\begin{eqnarray*}
W_{i_k'}&=&A_{m}\cap \sigma^{-m}A_{n}\cap\sigma^{-2m}A_{n}
\cap \cdots\cap\sigma^{-(u_k-1)m}A_{n}\cap\sigma^{-u_km}A_{n}\\
&=&A_{m}\cap \sigma^{-m}A_{m}\cap\sigma^{-2m}A_{m}
\cap \cdots\cap\sigma^{-(u_k-1)m}A_{m}\cap\sigma^{-u_km}A_{n},
\end{eqnarray*}
where $u_k$ is the overlap for the $k$th block. The $\psi$-mixing property yields
the following estimate
$$
\mu_\omega(W_{i'_k})
\le\alpha_2^{u_km}\left(\prod_{\ell=0}^{u_k}\mu_{\theta^{\ell m}\omega}(A_m)\right)
\mu_{\theta^{u_km}\omega}(A_n)
\le(\alpha_2\eta_1^m)^{u_k}\mu_{\theta^{u_km}\omega}(A_n),
$$
where $\alpha_2=1+\psi(0)$ and where we used that  (see the Remark~\ref{rmk:eta})
 $\mu_{\omega'}(A_m)\le\eta_1^m$
for any $\omega'$. By Lemma~\ref{product.mixing}
\begin{eqnarray*}
\mu_\omega\left(C_{\vec{v}}\right)
&\leq&\mu_\omega\left(\bigcap_{i=k}^{j}\sigma^{-(v_{i'_k}-u_k)}W_{i'_k}\right)\\
&\leq&\gamma^{s-1}\alpha_2^{j-s}\prod_{i=1}^{j}\mu_{\theta^{v_{i'_k}-u_k}\omega}(W_{i'_k})\\
& \leq&\gamma^{s-1}\alpha_2^{j-s}(\eta_1^M)^{j-s}(\alpha_2\eta_1^m)^u\prod_{k=1}^s\mu_{\theta^{v_{i_k}}\omega}(A_n),
\end{eqnarray*}
where $\gamma=1+\psi(\delta-n)$, 
and the components of $\vec{n}=(n_1,\dots,n_r)$ as in Lemma~\ref{product.mixing} are
given by $n_{i_k}=n$ for $k=1,\dots,s$ (for the long returns between clusters, i.e., $>\delta$) and $n_i=M$ for $i\not=i_k,\;k=1,\dots,s$, where 
$u=\sum_i u_i$ is the total overlap.  We have used that  $\mu_\omega(A_M)\le\eta_1^M$ for any $\omega$.

To count the number of return times vectors, note that there are 
$\binom{r-1}{j-1}$ many possibilities to choose the 
$j$ positions $i'_1,\dots,i'_j$ of the returns $>M$. Out of those we can pick in
$\binom{j-1}{s-1}$ many ways the long return
($\ge\delta$) positions $i_1,\dots,i_s$. Moreover, each choice allows for $<\delta^{j-s}$
many ways to fill in the actual $j-s$ many intermediate return times (between $M$ and $\delta$).

For every fixed set of $j$ returns larger than  $M$ and for a fixed value 
of overlaps $u$ there are 
$\binom{u-1}{r-j-1}$ many ways to distribute the $u$ overlaps 
into the remaining $r-j$ many returns which are shorter than $M$.

For each fixed set of long ($\ge\delta$) return times $v_{i_1},\dots,v_{i_s}$ and given 
value of overlaps $u$ there are consequently 
$$
 \binom{j-1}{s-1}\binom{r-1}{j-1}\delta^{j-s}\binom{u-1}{r-j-1}
$$
many possibilities. We thus obtain:
\begin{eqnarray*}
\sum_{\vec{v}\in R^s_{r,j,u}} \mu_\omega(C_{\vec{v}})\hspace{-2cm}\\
&\le&\binom{j-1}{s-1}\binom{r-1}{j-1}\delta^{j-s}\binom{u-1}{r-j-1}
\gamma^{s-1}(\eta_1^M)^{j-s}(\alpha_2\eta_1^m)^u
\sum_{v_{i_1}<\dots<v_{i_s}\le N}\prod_{k=1}^s\mu_{\theta^{v_{i_k}}\omega}(A_n)\\
&\le&\binom{j-1}{s-1}\binom{r-1}{j-1}\delta^{j-s}\binom{u-1}{r-j-1}\gamma^{s-1}(\eta_1^M)^{j-s}(\alpha_2\eta_1^m)^u
\frac1{s!}\left(\sum_{i=1}^N\mu_{\theta^i\omega}(A_n)\right)^s.
\end{eqnarray*}
Therefore, since $\mu_\omega(\zeta_n^x)=\sum_{i=1}^N\mu_{\theta^i\omega}(A_n)$,
$$
\sum_{\vec{v}\in R^s_{r,j,u}} \mu_\omega(C_{\vec{v}})
\leq\gamma^{s-1}\binom{j-1}{s-1}\binom{r-1}{j-1}(\delta\eta_1^M)^{j-s} 
\binom{u-1}{r-j-1}(\alpha_2\eta_1^m)^u\frac{\mu_\omega(\zeta_n^x)^s}{s!}.
$$

\noindent {\bf (B)} If $s=1$ then all returns between blocks are less than $\delta$ for all $k$. In the
same way as above we obtain
$$
\sum_{\vec{v}\in R^1_{r,j,u}} \mu_\omega(C_{\vec{v}}) \leq(\delta\eta_1^M)^{j-1}
\binom{r-1}{j-1}
\binom{u-1}{r-j-1}(\alpha_2\eta_1^m)^u\mu_\omega(\zeta_n^x).
$$

\vspace{2mm}

\noindent Summing over $s$ and using the estimates from~(A) and~(B) yields
\begin{eqnarray*}
\sum_{\vec{v}\in R_r}\mu_\omega(C_{\vec{v}}) 
&=&\sum_j\sum_{s=1}^{j-1}\ \sum_{u=r-j}^\infty \ \sum_{\vec{v}\in R_{r,j,u}^s}\mu_\omega(C_{\vec{v}})\\
&\leq&\sum_{j=2}^r\gamma^{s-1} \sum_{s=1}^{j-1} \binom{j-1}{s-1}
\frac{\mu_\omega(\zeta_n^x)^s}{s!}(\delta\eta_1^M)^{j-s} \binom{r-1}{j-1}
\sum_{u=r-j}^\infty\binom{u-1}{r-j-1}
(\alpha_2\eta_1^m)^u\\
&\leq&\sum_{j=2}^r\gamma^{s-1} \sum_{s=1}^{j-1} \binom{j-1}{s-1}
\frac{\mu_\omega(\zeta_n^x)^s}{s!}(\delta\eta_1^M)^{j-s} \binom{r-1}{j-1}
\left(\frac{\alpha_2\eta_1^m}{1-\alpha_2\eta_1^m}\right)^{r-j}\\
\end{eqnarray*}
(as $\sum_{u=q}^\infty\binom{u-1}{q-1}x^u=\left(\frac{x}{1-x}\right)^q$).
The lemma now follows since 
$\frac{\alpha_2\eta_1^m}{1-\alpha_2\eta_1^m}\le\alpha'\eta_1^m$ with an
$\alpha'$ slightly larger than $\alpha_2$. 
\end{proof}

\section{Distribution near periodic points for $\psi$-mixing measures}\label{distribution}

 We will also need the almost sure convergence of 
$\zeta_n^x= \sum_{j=1}^{N_n}\chi_{A_n}\circ\sigma^j$ (where $N_n=\left[t/\mu(A_n(x))\right]$)
 which is proved in \cite[Lemma 9]{RouSauVar13}.  
The following lemma requires the Assumption~(iii).  

\begin{lemma}\label{almost.sure.limit}
If there is $q>2\frac{\log \eta_1}{\log\eta_0}$ such that $\psi(k)k^q\to 0$ as $k\to \infty$, 
then $\mu_\omega(\zeta_n^x)\to t$  for $\PP$-almost every $\omega$.
\end{lemma}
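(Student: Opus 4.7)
The plan is to view $V_n(\omega):=\mu_\omega(\zeta_n^x)$ as an ergodic sum for $\theta$ on the driving space $\Omega$ and deduce $\PP$-a.s.\ convergence via a concentration estimate. Using the $S$-invariance of $\nu$, which by disintegration gives $\sigma_*\mu_\omega=\mu_{\theta\omega}$ for $\PP$-a.e.\ $\omega$, one has
$$
V_n(\omega)=\sum_{j=1}^{N_n}\mu_\omega(\sigma^{-j}A_n(x))=\sum_{j=1}^{N_n}f_n(\theta^j\omega),\qquad f_n(\omega):=\mu_\omega(A_n(x)).
$$
Since $\theta$ preserves $\PP$, $\EE V_n=N_n\mu(A_n(x))\to t$, so the task reduces to showing $V_n-\EE V_n\to 0$ $\PP$-a.s.

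Next I would estimate the covariances $\mathrm{Cov}(f_n,f_n\circ\theta^k)$ under $\PP$. For $k\le n$, the $L^\infty$ bound $\mu_\omega(A_n(x))\le\eta_1^n$ from Remark~\ref{rmk:eta} together with $\EE f_n=\mu(A_n(x))$ yields $|\mathrm{Cov}|\le\eta_1^n\mu(A_n(x))$. For $k>n$, Assumption~(i) applied to $\mu_\omega$ gives
$$
\bigl|\mu_\omega(A_n(x)\cap\sigma^{-k}A_n(x))-\mu_\omega(A_n(x))\mu_{\theta^k\omega}(A_n(x))\bigr|\le\psi(k-n)\mu_\omega(A_n(x))\mu_{\theta^k\omega}(A_n(x)),
$$
and integrating in $\omega$ against $\PP$, then invoking the $\alpha$-mixing of $\mu$ from Assumption~(ii) in the form $|\mu(A_n(x)\cap\sigma^{-k}A_n(x))-\mu(A_n(x))^2|\le\psi(k-n)$, yields $|\mathrm{Cov}(f_n,f_n\circ\theta^k)|\le C\psi(k-n)$.

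The main obstacle is converting these estimates into $\PP$-a.s.\ convergence, since the naive bound $\mathrm{Var}(V_n)\lesssim tn\eta_1^n+N_n\sum_{j\ge1}\psi(j)$ does not decay in $n$: Assumption~(iii) only gives $N_n\lesssim\eta_0^{-n}$, and the second summand grows exponentially. I would therefore pass to higher moments via a Rosenthal/Yokoyama-type inequality for stationary $\psi$-mixing sequences, which under the polynomial decay $\psi(k)k^q\to 0$ produces $\EE|V_n-\EE V_n|^{2p}\le C_p(t\eta_1^n)^p$ up to a mixing correction. Choosing $p$ large enough that $\eta_1^{2p}<\eta_0$ -- which is precisely where the hypothesis $q>2\log\eta_1/\log\eta_0$ enters -- makes the series $\sum_n\PP(|V_n-\EE V_n|>\varepsilon)\le\sum_n C_p(t\eta_1^n)^p/\varepsilon^{2p}$ summable, and Borel--Cantelli then concludes the argument. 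The delicate calibration of $p$, $q$ and the mixing correction constitutes the technical heart of the proof, and is carried out in \cite[Lemma~9]{RouSauVar13}.
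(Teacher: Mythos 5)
First, a point of reference: the paper does not actually prove this lemma. The sentence immediately preceding the statement attributes it to \cite[Lemma~9]{RouSauVar13}, and no argument is given in the text. So there is no internal proof to match your attempt against; the comparison has to be on the merits of your argument as a standalone proof, and there it has a genuine gap. Your setup is correct and is the natural first half of any proof: the identity $\mu_\omega(\zeta_n^x)=\sum_{j=1}^{N_n}\mu_{\theta^j\omega}(A_n(x))$ via $\sigma_*\mu_\omega=\mu_{\theta\omega}$ is exactly how the paper itself manipulates this quantity (e.g.\ in the proof of Lemma~\ref{R.small}), the mean computation is right, and both covariance bounds (the $L^\infty$ bound $\eta_1^n\mu(A_n)$ for $k\le n$, and the combination of fibrewise $\psi$-mixing with the additive mixing of $\mu$ giving $C\psi(k-n)$ for $k>n$) are correctly derived. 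You also correctly diagnose that the resulting variance bound does not decay because $N_n$ grows exponentially while the additive mixing error does not.

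The gap is in the proposed repair. A Rosenthal/Yokoyama inequality ``for stationary $\psi$-mixing sequences'' is not available here: the sequence being summed is $j\mapsto\mu_{\theta^j\omega}(A_n(x))$, a process on $(\Omega,\theta,\PP)$, and $\PP$ is only assumed ergodic --- the $\psi$-mixing in conditions (i)--(ii) concerns $\sigma$ on the fibres and the marginal $\mu$, not $\theta$ on $\Omega$. So the off-the-shelf moment inequality you invoke does not apply to this process, and the ``mixing correction'' you set aside is precisely the term that already killed the second-moment argument; raising the moment order does not obviously shrink it. Moreover, the claimed calibration is internally inconsistent: if $\EE|V_n-\EE V_n|^{2p}\le C_p(t\eta_1^n)^p$ really held with negligible correction, then already $p=1$ would make $\sum_n\PP(|V_n-\EE V_n|>\varepsilon)$ summable (since $\eta_1<1$), contradicting your own observation that the variance bound fails; and the condition ``$\eta_1^{2p}<\eta_0$'' involves neither $\psi$ nor $q$, so it cannot be ``precisely where the hypothesis $q>2\frac{\log\eta_1}{\log\eta_0}$ enters.'' The decisive step --- converting the moment information into almost sure convergence under exactly that hypothesis on $q$ --- is the whole content of the lemma, and your proposal ultimately defers it to \cite[Lemma~9]{RouSauVar13}, i.e.\ to the same citation the paper uses. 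As a self-contained proof it is therefore incomplete at its central point.
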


Let us put $\zeta_{n,u}^x=\sum_{k=0}^{N}\chi_{A_{n+mu}}\circ\sigma^k$, where 
$N=\frac{t}{\mu(A_n)}$.  We will assume that the limit
\begin{align}
\vartheta(x)=\lim_{n\to\infty}\frac{\mu(A_{n+m}(x))}{\mu(A_n(x))}
\label{eq:vartheta}
\end{align}
 exists.
Then $\mu(\zeta_{n,u}^x)=N\mu(A_{n+mu})=\frac{\mu(A_{n+mu})}{\mu(A_n)}t$
converges to $\vartheta^ut$ as $n\to\infty$. 
By the same argument as in Lemma~\ref{almost.sure.limit} we conclude the following result
of which Lemma~\ref{almost.sure.limit} is the special case $u=0$.

 \begin{corollary} 
 If $\psi(k)k^q\to 0$ as $k\to \infty$ for some $q>2\frac{\log \eta_1}{\log\eta_0}$ and the limit $\vartheta(x)=\lim_{n\to\infty}\frac{\mu(A_{n+m}(x))}{\mu(A_n(x))}$
exists, then 
$$
\mu_\omega(\zeta_{n,u})\to\vartheta^ut
$$
as $n\to\infty$ for $\PP$-almost every $\omega$. 
\end{corollary}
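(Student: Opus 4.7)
Proof proposal: Following the hint that the result follows ``by the same argument as Lemma~\ref{almost.sure.limit}'', the plan is to mimic the variance-plus-Borel-Cantelli scheme of \cite[Lemma~9]{RouSauVar13}, replacing $A_n$ by the smaller cylinder $A_{n+mu}$ throughout while keeping the observation window $N=[t/\mu(A_n)]$ tied to the larger cylinder. The mean computation $\EE_\PP[\mu_\omega(\zeta_{n,u}^x)] = N\mu(A_{n+mu}) = t\cdot\mu(A_{n+mu})/\mu(A_n)\to\vartheta^u t$ has already been carried out in the paragraph preceding the Corollary using~\eqref{eq:vartheta}, so it suffices to prove that the centred fluctuation
$$
f_{n,u}(\omega):=\mu_\omega(\zeta_{n,u}^x)-N\mu(A_{n+mu})=\sum_{k=0}^N Y_k(\omega),\qquad Y_k(\omega):=\mu_{\theta^k\omega}(A_{n+mu})-\mu(A_{n+mu}),
$$
tends to $0$ for $\PP$-almost every $\omega$.

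I would estimate $\EE_\PP[f_{n,u}^2]$ by using the $\PP$-invariance of $\theta$ to reduce the double sum to $\sum_{\ell=0}^N(N+1-\ell)|\EE_\PP[Y_0 Y_\ell]|$, and then control the correlations in two regimes. In the long-range regime $\ell\ge n+mu$, combining the $\psi$-mixing of the sample measures (Assumption~(i)), applied inside the $\PP$-integral to $\mu_\omega(A_{n+mu}\cap\sigma^{-\ell}A_{n+mu})$, with the $\alpha$-mixing of the marginal measure $\mu$ (Assumption~(ii)) yields a bound of the form $|\EE_\PP[Y_0 Y_\ell]|\le C\psi(\ell-n-mu)$; in the short-range regime $\ell<n+mu$, the uniform fiber bound $\mu_\omega(A_{n+mu})\le\eta_1^{n+mu}$ from Remark~\ref{rmk:eta} gives $|\EE_\PP[Y_0 Y_\ell]|\le 2\mu(A_{n+mu})\eta_1^{n+mu}$. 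Together with the consequence $N\le t\eta_0^{-n}$ of Assumption~(iii), these are the ingredients that Lemma~\ref{almost.sure.limit} (via \cite[Lemma~9]{RouSauVar13}) converts into a summable variance estimate along a polynomially growing subsequence $n_k\to\infty$.

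Chebyshev's inequality then converts the variance estimates into tail bounds, and Borel-Cantelli delivers $f_{n_k,u}\to 0$ $\PP$-a.s.\ along the subsequence; a standard auxiliary bound on $\mu_\omega(\zeta_{n+1,u}^x)-\mu_\omega(\zeta_{n,u}^x)$ fills in the gaps to give convergence along the full sequence. The main obstacle is the delicate balance between the large number of summands ($N\sim\eta_0^{-n}$) and the small size of each ($\eta_1^{n+mu}$): the quantitative hypothesis $\psi(k)k^q\to 0$ for $q>2\log\eta_1/\log\eta_0$ is precisely tuned so that both the long-range mixing contribution and the short-range diagonal contribution to the variance are summable. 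The only differences relative to Lemma~\ref{almost.sure.limit} are a harmless multiplicative factor $\eta_1^{mu}$ in every estimate and the replacement of the target limit $t$ by $\vartheta^u t$, so the structure of the argument transfers verbatim.
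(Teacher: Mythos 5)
Your proposal is correct and follows essentially the same route as the paper: the paper's own argument consists precisely of the mean computation $N\mu(A_{n+mu})=t\,\mu(A_{n+mu})/\mu(A_n)\to\vartheta^u t$ given in the preceding paragraph, together with the observation that the almost-sure concentration argument of Lemma~\ref{almost.sure.limit} (i.e.\ the second-moment/Borel--Cantelli scheme of \cite[Lemma 9]{RouSauVar13}) applies verbatim with $A_n$ replaced by $A_{n+mu}$ while the window $N=[t/\mu(A_n)]$ is kept. Your reconstruction of the variance estimate, the role of the hypothesis $\psi(k)k^q\to0$, and the gap-filling step simply spells out the details that the paper delegates to that citation.
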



Although for a periodic point $x$  with (minimal) period $m$ the limit
$\lim_{\ell\rightarrow\infty}\frac1\ell\left| \log\mu(A_{\ell m}(x))\right|$
always exists (see Lemma~7 of~\cite{HV09}), we cannot necessarily conclude that the limit 
$\vartheta=\lim_{n\rightarrow\infty}\frac{\mu(A_{n+m}(x))}{\mu(A_n(x))}$ exists. 

 For $t>0$ and
integers $n$ we put $\zeta_n^t$ for the counting function 
$\sum_{j=0}^{N_n} \chi_{A_n(x))}\circ \sigma^j$ with the observation time
$N_n=\left[t/\mu(A_n(x))\right]$
(where $x$ is periodic with minimal period $m$). For equilibrium states for H\"older 
continuous potentials $f$ (with zero pressure) on Axiom~A systems, Hirata~\cite{Hirata1} has shown
that $\vartheta(x)=\exp\sum_{j=1}^mf(\sigma^jx)$ for  periodic points $x$ with 
minimal period $m$, see Example~\ref{example.Axiom.A}.

\vspace{3mm}

\noindent In order to satisfy the assumptions of Proposition~\ref{sevastyanov} we put
$\gamma=\alpha$,
$\gamma_1=\alpha\delta_n\eta^M$ and $\gamma_2=\alpha\eta^m$

\begin{theorem}\label{psi-mixing} 
Suppose that we have a random SFT driven by an invertible ergodic measure preserving system $(\Omega, \theta, \PP)$ with marginal measure $\mu$ and satisfying conditions (i)--(iv), where the function $\phi$ is such that there is $q>2\frac{\log \eta_1}{\log\eta_0}$ with $\psi(k)k^q\to 0$ as $k\to \infty$.
 Let $x\in \Sigma$ a periodic point with minimal period $m$ and 
assume the limit defining $\vartheta$ exists and let $\Theta=1-\vartheta$.

Then 
$$
\mu_\omega(\zeta_n^x=r)\longrightarrow e^{-t}P_r(\Theta t,\vartheta)
$$
as $n\to\infty$ for $\PP$-a.e. $\omega$.
\end{theorem}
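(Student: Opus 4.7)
The plan is to invoke Proposition~\ref{sevastyanov} with the random variables $\rho_{j,n} = \chi_{A_n(x)} \circ \sigma^j$ on $(\Sigma, \mu_\omega)$, geometric parameter $p = \vartheta$, and effective time $\Theta t$; the main task is to verify conditions~(a) and~(b) for $\PP$-almost every $\omega$. I would choose the short-return scale $M_n = \lfloor n/2 \rfloor$, so that for $n$ large, the minimal period assumption on $x$ forces every $\ell < M_n$ with $A_n(x) \cap \sigma^{-\ell} A_n(x) \neq \emptyset$ to be a multiple of $m$, matching the hypothesis of Lemma~\ref{R.small}. Then choose $\delta_n$ (say $\delta_n = n^2$) satisfying both $\delta_n - n \to \infty$, so $\psi(\delta_n - n) \to 0$, and $\delta_n \eta_1^{M_n} \to 0$ via Remark~\ref{rmk:eta}.

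Condition~(b) is then immediate from Lemma~\ref{R.small}: for $r$ fixed every term in the bound carries a factor $(\delta_n \eta_1^{M_n})^{j-s}$ with $j - s \ge 1$ that vanishes, while $\mu_\omega(\zeta_n^x)^s / s!$ remains bounded by Lemma~\ref{almost.sure.limit}, the mixing prefactor $\gamma \to 1$, and $(\alpha' \eta_1^m)^{r-j}$ is a fixed constant for $m$ and $r$ fixed.

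For Condition~(a), any $\vec{v} \in G_{r,j} \setminus R_{r,j}$ has $j-1$ inter-block gaps exceeding $\delta_n$ while every short return is a multiple of $m$. Applying Lemma~\ref{product.mixing} across the long gaps, with block-length parameters $n_i \le r M_n \ll \delta_n$, yields
\begin{align*}
\mu_\omega(C_{\vec{v}}) = (1 + o(1)) \prod_{k=1}^j \mu_{\theta^{v_{i_k}} \omega}(B_k), \qquad B_k = \bigcap_{v_i \in \text{block } k} \sigma^{-(v_i - v_{i_k})} A_n(x).
\end{align*}
By periodicity of $x$ and the identity $A_n(x) \cap \sigma^{-\ell m} A_n(x) = A_{n+\ell m}(x)$ valid for $\ell m \le n$, each $B_k$ collapses to $A_{n + m w_k}(x)$, where $w_k$ is the total overlap of block $k$. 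Parameterise $\vec{v}$ by block sizes $b_1, \ldots, b_j \ge 1$ with $\sum_k b_k = r$, individual short-return overlaps $w^{(k,\ell)} \ge 1$ summing (for each $k$) to $w_k$, and block-head positions $v_{i_1} < \cdots < v_{i_j}$. The sum over head positions approximates $\tfrac{1}{j!} \prod_k \mu_\omega(\zeta_{n, w_k}^x)$, which by the corollary after Lemma~\ref{almost.sure.limit} converges $\PP$-a.s.\ to $\tfrac{t^j}{j!} \vartheta^{w_1 + \cdots + w_j}$. Summing over individual overlaps contributes $\sum_{w \ge 1} \vartheta^w = \vartheta/(1-\vartheta)$ per immediate return, hence $(\vartheta/(1-\vartheta))^{r-j}$ in total; multiplying by the number $\binom{r-1}{j-1}$ of compositions of $r$ into $j$ positive parts and summing over $j$ yields
\begin{align*}
\sum_{\vec{v} \in G_r \setminus R_r} \mu_\omega(C_{\vec{v}}) \longrightarrow \sum_{j=1}^r \binom{r-1}{j-1} \left(\frac{\vartheta}{1-\vartheta}\right)^{r-j} \frac{t^j}{j!} = Q_r(\Theta t, \vartheta)
\end{align*}
by direct algebraic rearrangement, which is condition~(a) with parameter $\Theta t$.

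The main obstacle I anticipate is uniform control of the head-position sum under the inter-block separation constraint: configurations whose separation falls below $\delta_n$ plus a block length must be shown not to disturb the limit, which is achieved either by enlarging $R_r$ to absorb them or by a direct estimate using $\psi$-mixing together with Remark~\ref{rmk:eta} to bound $\mu_\omega(A_n)$. A secondary, routine point is the almost-sure nature of the convergence: the corollary following Lemma~\ref{almost.sure.limit} is applied only for the finitely many values $u = 0, 1, \ldots, r-1$, so intersecting the resulting full-measure sets yields a single $\PP$-full-measure $\omega$-set on which the whole argument runs; Proposition~\ref{sevastyanov} then delivers the claimed Pólya–Aeppli law.
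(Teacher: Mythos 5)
Your proposal follows essentially the same route as the paper's proof: Proposition~\ref{sevastyanov} with $\rho_{j,n}=\chi_{A_n(x)}\circ\sigma^j$; condition~(b) from Lemma~\ref{R.small} via the factor $(\delta_n\eta_1^{M_n})^{j-s}$ with $j-s\ge1$; and condition~(a) by collapsing each block of immediate returns to a cylinder $A_{n+mw_k}(x)$, factorising across the long gaps with Lemma~\ref{product.mixing}, and invoking the corollary $\mu_\omega(\zeta_{n,u}^x)\to\vartheta^ut$ almost surely. The combinatorics ($\binom{r-1}{j-1}$ choices of block heads, the geometric sum over overlaps producing $(\vartheta/(1-\vartheta))^{r-j}$, and the algebraic identification with $Q_r(\Theta t,\vartheta)$) coincide with the paper's computation, and the ``main obstacle'' you flag is exactly the paper's error term $E_{r,j,u}''$.

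There is, however, one concrete misstep, and it sits on the only genuinely delicate limit interchange. You claim the corollary after Lemma~\ref{almost.sure.limit} is needed ``only for the finitely many values $u=0,1,\dots,r-1$''. That is false: each individual overlap $w^{(k,\ell)}=(v_\ell-v_{\ell-1})/m$ ranges over $\{1,\dots,\lfloor M_n/m\rfloor\}$, so a block overlap can be as large as $(r-1)M_n/m\to\infty$, and the sum over overlap configurations has unboundedly many terms as $n\to\infty$. Passing the limit $n\to\infty$ inside this sum therefore needs a domination uniform in $n$, not just termwise convergence for a fixed finite list of $u$. The standard repair (and what the paper does, packaged as the error term $E_{r,j,u}'$) is to use the $\psi$-mixing bound $\mu_{\omega'}(A_{n+mu})\le(\alpha_2\eta_1^m)^u\mu_{\theta^{um}\omega'}(A_n)$ from the proof of Lemma~\ref{R.small} to get $\mu_\omega(\zeta_{n,u}^x)\le C(\alpha_2\eta_1^m)^u$ for a.e.\ $\omega$ and all large $n$, truncate the overlap sum at a fixed $U$, apply the corollary for $u\le U$ (a countable intersection of full-measure sets is harmless), and let $U\to\infty$ last; the same device also corrects for overlap compositions that cannot be realised by returns shorter than $M_n$. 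Without this step your verification of condition~(a) is incomplete, though the fix is routine and already implicit in the geometric bounds you set up.
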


\begin{proof} We use Proposition~\ref{sevastyanov} and have to verify conditions~(a) and~(b). 

Let $\vec{v}\in G_{r,j}\setminus R_{r,j}$ and let $v_{i_1},v_{i_2},\dots, v_{i_j}$ be the 
heads of the blocks of short returns, that is $i_1=1$ and $v_{i_k}-v_{i_k-1}\ge\delta$
for $k=2,\dots,j$. Moreover $v_{\ell}-v_{\ell-1}\le M$ for $\ell\not\in\{i_1,\dots,i_j\}$.
By Lemma~\ref{product.mixing}  
($\Delta$ is as defined in section~\ref{returntimespatterns})
$$
\left|\mu_\omega(C_{\vec{v}})-\prod_{k=1}^j\mu_{\theta^{v_{i_k}}\omega}(A_{n+mu_k})\right|
\le((1+\psi(\Delta(\vec{v})-n))^j-1)\prod_{k=1}^j\mu_{\theta^{v_{i_k}}\omega}(A_{n+mu_k}),
$$
where $u_k$ is the overlap of the $k$th block beginning with $v_{i_k}$. Notice that 
for the $k$th cluster one has
$$
A_{n+mu_k}=\bigcap_{\ell=i_k}^{i_{k+1}-1}\sigma^{-v_\ell}A_n.
$$
Thus
$$
\mu_\omega(C_{\vec{v}})
=(1+\mathcal{O}(j\psi(\delta)))\prod_{k=1}^j\mu_{\theta^{v_{i_k}}\omega}(A_{n+mu_k})
$$
and summing over the total overlaps (see~(III)) yields for the principal term
$$
S_r(n):=\sum_{\vec{v}\in G_r\setminus R_r}\mu_\omega(C_{\vec{v}})
=\sum_{j=1}^r(1+\mathcal{O}(j\psi(\delta)))
\sum_{u\ge r-j} \ \sum_{\vec{v}\in G_{r,j,u}\setminus R_{r,j,u}}
\prod_{k=1}^j\mu_{\theta^{v_{i_k}}\omega}(A_{n+mu_k}).
$$
As there are $\binom{r-1}{j-1}$ many ways to choose the indices $i_1,\dots,i_j$,
we can now write for the last sum on the RHS:
$$
\sum_{\vec{v}\in G_{r,j,u}\setminus R_{r,j,u}} \ 
\prod_{k=1}^j\mu_{\theta^{v_{i_k}}\omega}(A_{n+mu_k})
=\binom{r-1}{j-1}\sum_{u_1+\cdots+u_j=u}\ \sum_{\vec{w}\in G_{j}}
\prod_{k=1}^j\mu_{\theta^{w_i}\omega}(A_{n+mu_k})+E_{r,j,u},
$$
where the error splits into two parts: $E_{r,j,u}=E_{r,j,u}'+E_{r,j,u}''$. 
We now estimate the two last error terms separately, where (a)
$E_{r,j,u}'$
accounts for the over counts of short returns on the RHS and (b)
$E_{r,j,u}''$ accounts for the contribution made by $\vec{v}\in R_{r,j,u}$ 
which are included on the RHS but are excluded on the LHS.

The summation over $u_1,\dots,u_j$ is such that the total overlap $u$ has 
been divided into $r-j$ non-empty sections for the short returns and 
then are clustered into the $j$ clusters where some of the $u_k$ might be 
zero which happens when there is no short return in the associated cluster
(i.e.\ $i_{k+1}=i_k+1$).

(a): The error term $E_{r,j,u}'$ accounts for the over counting of those 
return combinations that do not occur since not all $u_k$ are included in our sums.
For $\vec{v}\in G_r$ the short overlaps cannot be larger than $n$ which limits
the vales and multiplicities for the overlaps $u_k$. On the RHS however we 
impose no such restriction and thus have to correct for it with the error term
$E_{r,j,u}'$. 
Since every overlap $u_k$ has to be generated by at least one of the $r-j$ short returns
each of which in its turn is bounded by $M$ we therefore obtain for 
$E_{r,j}'(n)=\sum_jE_{r,j,u}'$
 the following  upper bound 
\begin{eqnarray*}
|E_{r,j}'(n)|&\le&\sum_{u\ge r-j}\ \sum_{\substack{u_1+\cdots+u_j=u\\ \min_k u_k\ge \frac{M}m}}\frac1{j!} \prod_{k=1}^j\mu_\omega(\zeta_{n,u_k}^x)\\
&\le&\sum_{u\ge\frac{M}m}\ \sum_{u_1+\cdots+u_j=u}\frac1{j!} \prod_{k=1}^j\mu_\omega(\zeta_{n,u_k}^x)\\
&\le&c_1t^j\sum_{u\ge\frac{M}m}\binom{u+j-1}{j-1}\vartheta^u
\end{eqnarray*}
for $n$ large enough. Hence $E_r'(n)=\sum_j\binom{r-1}{j-1}E_{r,j}'\longrightarrow0$
as $n\to\infty$ (as $\frac{M}m>\frac{n}{2m}\to\infty$) for almost every $\omega$
as the sum on the RHS is a tail sum of 
$\sum_{u=0}^\infty\binom{u+j-1}{j-1}\vartheta^u=(1-\vartheta)^{-j}$.

(b) The second part is given by $E_{r,j,u}''=\sum_{\vec{v}\in R_{r,j,u}}\prod_{k=1}^j\mu_{\theta^{w_i}\omega}(A_{n+mu_k})$.
For $n$ large enough we obtain as at the end of the proof of Lemma~\ref{R.small},
\begin{eqnarray*}
|E_r''|
&\le&c_2\sum_{j=2}^r\binom{r-1}{j-1} \sum_{s=1}^{j-1} \binom{j-1}{s-1}
\frac{t^s}{s!}\left(\frac{\delta t}{N}\right)^{j-s} 
\sum_{u=r-j}^\infty\binom{u-1}{r-j-1}\vartheta^u\\
&\leq&c_2\sum_{j=2}^r\binom{r-1}{j-1} \sum_{s=1}^{j-1} \binom{j-1}{s-1}
\frac{t^s}{s!}\left(\frac{\delta t}{N}\right)^{j-s} 
\left(\frac{\vartheta}{1-\vartheta}\right)^{r-j}.
\end{eqnarray*}
Since $j-s\ge1$ we get that $E_r''\to 0$ as $n\to\infty$ provided
$\delta/N\to0$.

\vspace{3mm}

\noindent Combining the estimates for these two error terms we conclude that $
E_r=E_r'+E_r''\to 0$ as $n\to\infty$ provided $\delta/N\to0$.

We thus obtain
\begin{eqnarray*}
S_r(n)&=&\sum_j\binom{r-1}{j-1}(1+\mathcal{O}(j\psi(\delta)))
\sum_{u\ge r-j}\ \sum_{u_1+\cdots+u_j=u}\frac1{j!}
\prod_{k=1}^j\left(\sum_{w=1}^N\mu_{\theta^{w}\omega}(A_{n+mu_k})\right)+E_r,
\end{eqnarray*}
and let $\delta\to\infty$ with $n\to\infty$. 
Hence we obtain for almost every $\omega$  the innermost sum is
$\mu_\omega(\zeta_{n,u_k}^x)\to t\vartheta^{u_k}$ and consequently
get that the principal term converges as follows:
\begin{align*}
S_r(n)\longrightarrow & \sum_j\binom{r-1}{j-1}
\sum_{u\ge r-j}\binom{u-1}{r-j-1}\frac1{j!}t^j\vartheta^u\\
&\qquad=\sum_j\frac{t^j}{j!}\binom{r-1}{j-1}\left(\frac\vartheta{1-\vartheta}\right)^{r-j}\\
&\qquad
=\frac1{(1-\vartheta)^r}\sum_j\frac{(\Theta t)^j}{j!}\binom{r-1}{j-1}\vartheta^{r-j}.
\end{align*}
The  combinatorial factor $\binom{u-1}{r-j-1}$ expresses the number of ways 
in which $u$ overlaps are distributed into $r-j$ short returns $\le M$
(and which are then clustered into $j$ clusters where some of them might be 
empty). This implies
$$
S_r(n)\longrightarrow Q_r(\Theta t,\vartheta)
$$
and thus verifies condition~(a) of Proposition~\ref{sevastyanov}.

To verify assumption~(b) we obtain
$$
\sum_{\vec{v}\in R_r} \mu_\omega(C_{\vec{v}}) \leq K_1\alpha^{r-1}\sum_{j=2}^r\sum_{s=1}^{j-1}
\left(\begin{array}{c}j-1\\s-1\end{array}\right) (\delta\eta^M)^{j-s}\frac{\mu_\omega(\zeta_n^x)^s}{s!}
\left(\begin{array}{c}r-1\\j-1\end{array}\right)(\alpha\eta^m)^{r-j},
$$
where $\alpha=1+\psi(0)$. Hence condition~(II) of Proposition~\ref{sevastyanov} is satisfied
since $j-s\ge1$.
\end{proof}

\section{Examples}\label{examples}

Let us note that  the recurrence properties at periodic points or otherwise do
not require the entropy to be finite (which is 
necessary in the theorem of Shannon-McMillan-Breiman for example). 
To make this point we will give below an example of infinite entropy

\vspace{4mm}

\subsection{Two-element Bernoulli shifts}
Some classes of examples of random SFTs to which our results apply was given in \cite[Section 6]{RT15}, including examples in the infinite alphabet case.  However, to give the reader some idea of systems to which our methods apply, we first give an elementary example (this is a simple version of \cite[Example 19]{RouSauVar13}).

Let $\Omega=\Sigma=\{0,1\}^{\N_0}$ and let $\PP$ be a Gibbs measure on $\Omega$
 (i.e., $\PP$ need not be Bernoulli or Markov). In this case the fibre $\Sigma_\omega$
 is equal to $\Sigma$ for every $\omega$ (in particular the transition matrix
 $A=A(\omega)$ is the full matrix of $1$s. Then fixing  $\alpha, \beta\in (0,1)$, for $\omega=(\omega_0, \omega_1, \ldots)\in \Omega$, let
\begin{equation*}
p(\omega)=\begin{cases} \alpha & \text{ if }  \omega_0=0,\\
\beta & \text{ if }  \omega_0=1.
\end{cases}
\end{equation*}
Then we can define a random Bernoulli measure by
$$\mu_{\omega}[x_0, \ldots, x_n]=p_{x_0}(\omega)p_{x_1}(\theta\omega)\cdots p_{x_n}(\theta^n\omega)$$ where 
\begin{equation*}
p_{x_i}(\omega)=\begin{cases} p(\omega) & \text{ if }  x_i=0,\\
1-p(\omega) & \text{ if }  x_i=1.
\end{cases}
\end{equation*}

As shown in \cite[Example 19]{RouSauVar13}, this system satisfies conditions (i)--(iv), so Theorem~\ref{psi-mixing} holds.  Moreover we can give a formula for the parameter $\vartheta$ explicitly: if $x\in \Sigma$ is a periodic point of period $m$, the Bernoulli property of our sample measures and $\theta$-invariance of $\PP$ allow us to compute that
$$
\vartheta(x)=\int p_{x_0}(\omega)p_{x_1}(\theta\omega)\cdots p_{x_{m-1}}(\theta^{m-1}\omega)~d\PP(\omega).
$$

\subsection{i.i.d.\ infinite alphabet systems}
Let us now consider a stochastic system which is a shift space $\Sigma$ over
a countably infinite alphabet with a Bernoulli measure which we below choose
to obtain infinite entropy. The `driving space' $\Omega$ will be  the infinite product
of intervals.

Let $I$ be a measurable space with a measure $m$ and 
 let $\Omega=I^{\N_0}$ be equipped with the product measure $\PP$. 
 Let $\Sigma=\mathbb{N}^{\N_0}$ with the left shift map $\sigma$ be the full shift over a 
 countable alphabet and $\vec{p}:\Omega\to(0,1)^{\N}$ a function that depends only on 
 the zeroth coordinate, i.e.\ $p(\omega)=p(\omega_0)$, and satisfies
 $\sum_{n=1}^\infty p_n(\omega_0)=1$ for all $\omega_0$,
where $p_n$ are the components of $\vec{p}$. Assume that $\sup_{\omega_0,n}p_n(\omega_0)<1$.
For every $\omega=(\omega_0, \omega_1, \ldots)\in \Omega$ we thus obtain a Bernoulli measure
$\mu_\omega$ on $\Sigma$ defined by
$$
\mu_{\omega}(x_0, \ldots, x_n)=p_{x_0}(\omega)p_{x_1}(\theta\omega)\cdots p_{x_n}(\theta^n\omega).
$$ 
Clearly, $\mu_\omega$ satisfies the Assumptions~(i) and~(iv). The marginal measure $\mu$
is  Bernoulli with weights $\bar{p}_n=\int_{\Omega}p_n(\omega_0)\,d\PP(\omega)$
and consequently $\psi$-mixing. Assumption~(iii) therefore need not be met, so the conclusions of Theorem~\ref{psi-mixing} hold..

To compute $\vartheta$, if $x\in \Sigma$ is a periodic point of minimal period $m$ then, as before,
$$
\vartheta(x)=\int_\Omega p_{x_0}(\omega)p_{x_1}(\theta\omega)\cdots p_{x_{m-1}}(\theta^{m-1}\omega)\,d\PP(\omega)
=\prod_{j=0}^{m-1}\bar{p}_{x_j}
=\mu(x_0x_1\cdots x_{m-1})
$$
by $\theta$-invariance of $\PP$ and since 
$\PP(\omega_0\omega_1\cdots\omega_{m-1})=\prod_{j=0}^{m-1}\PP(\omega_j)$.
Equivalently, if  $N_s=|\{j\in\{0,\dots,m-1\}: x_j=s\}|$ denotes the number of times 
 the symbol $s\in\N$ occurs during a period of $x$, then
$$
\vartheta(x)=\prod_s\bar{p}_s^{N_s}.
$$

\subsubsection{Two-element Bernoulli revisited}
In the special case above when the two element 
alphabet $\{0,1\}$ is used we can equip $\Omega=\{0,1\}^{\N_0}$ with the 
Bernoulli measure with weights $p,q=1-p$, $p,q>0$. Then  $\bar{p}_0=\alpha p+\beta q$
and $\bar{p}_1=(1-\alpha)p+(1-\beta)q$ and consequently 
$$
\vartheta=(\alpha p+\beta q)^\ell((1-\alpha)p+(1-\beta)q)^{m-\ell},
$$
where $\ell $ is the number of times the symbol $0$ occurs on the 
periodic string $x_0\cdots x_{m-1}$.
In the deterministic case when $\alpha=\beta$ we obtain
$\vartheta=\alpha^\ell(1-\alpha)^{m-\ell}$
which is in accordance with Hirata's result.

\subsubsection{Infinite entropy system}
Let us now choose $I=[\varepsilon,1]$ with the normalised Lebesgue measure
and $\varepsilon\in(0,1)$. One can then define a family of 
probability vectors $\{\vec{p}(\omega)\}_{\omega\in \Omega}$ by 
$$
p_n(\omega)=\frac{G(\omega_0)}{n\log^{1+\omega_0}n},
$$
if $n\ge3$, and equal to $0$ if $n=1,2$,
where $G(\omega_0)\in\mathbb{R}_+$ is a normalising constant for every $\omega_0\in I$.
The marginal measure $\mu$ is Bernoulli with weights 
$\bar{p}_n=\int_\varepsilon^1\frac{G(t)}{n\log^{1+t}n}\,\frac{dt}{1-\varepsilon}$ 
where $G(t)\sim\frac1t$. Its entropy is infinite since
\begin{eqnarray*}
h(\mu)&=&-\sum_n\int_\varepsilon^1\frac{G(t)}{n\log^{1+t}n}\,\frac{dt}{1-\varepsilon}
\log\int_\varepsilon^1\frac{G(t)}{n\log^{1+t}n}\,\frac{dt}{1-\varepsilon}\\
&\ge&\sum_n\int_\varepsilon^1\frac{G(t)}{n\log^{1+t}n}\,\frac{dt}{1-\varepsilon}
\left(\log n+\log\log n-\log\int_\varepsilon^1G(t)\,\frac{dt}{1-\varepsilon}\right)\\
&\ge&\sum_n\int_\varepsilon^1\frac{G(t)}{n\log^{t}n}\,\frac{dt}{1-\varepsilon}
-c_1\sum_n\int_\varepsilon^1\frac{G(t)}{n\log^{1+t}n}\,\frac{dt}{1-\varepsilon}
=\infty
\end{eqnarray*}
as the second term converges, where we used that $\frac1{\log^{1+t}n}\le\frac1{\log n}$ and  $c_1=\log\int_\varepsilon^1G(t)\,\frac{dt}{1-\varepsilon}$.

Again, if $x\in\Sigma$ is periodic with minimal period $m$ then
$\vartheta(x)=\bar{p}_{x_0}\bar{p}_{x_1}\cdots\bar{p}_{x_{m-1}}$.

\subsection{Equilibrium states for Axiom A systems} \label{example.Axiom.A}
It is standard to code Axiom A dynamical systems to understand their ergodic properties, see \cite{Bow}.  In this case, a H\"older potential gives rise to a Gibbs state.  Here we briefly discuss the random case, going directly to the symbolic setting, and show how our results apply here.  We emphasise that in our exposition, we fix the dynamics and put the randomness in the measure.

Let $(\Omega, \theta)$ be a two-sided shift with an invariant measure
$\PP$. We assume $(\Sigma, \sigma)$ is a topologically mixing subshift of finite type and $\{f_\omega\}_{\omega\in \Omega}$ 
a family of H\"older continuous functions on $\Sigma$ whose H\"older norms are uniformly bounded.
For some $\hat\kappa\in(0,1)$, the $\hat\kappa$-H\"older norm of a function $f:\Sigma\to\mathbb{R}$ 
is given by $\|f\|=|f|_\infty+\sup_n\hat\kappa^{-n}\var_nf$, where  
$\var_n f=\sup_{x_i=y_i: |i|<n}|f(x)-f(y)|$ is the $n$-variation of $f$.

By~\cite{Kif98} there exist random equilibrium states $\mu_\omega$ that satisfy the 
generalised invariance property $\sigma\mu_\omega=\mu_{\theta\omega}$.
The fibred measures $\mu_\omega$ are Gibbs with respect to $f_\omega$.
 We can assume that the functions $f_\omega$ have zero pressure.
We conclude from the Gibbs property that the fibred measures are $\psi$-mixing 
where $\psi$ decays exponentially at some rate $\kappa<1$.
Recall that (see e.g.~\cite{Bow})  $\mu_\omega=h_\omega\nu_\omega$ where 
$h_\omega$ are the normalised eigenfunction for the 
largest eigenvalue of the transfer operator and $\nu_\omega$ are the associated 
eigenfunctionals which are $e^{-f_\omega}$-conformal, i.e.\ if $\sigma$ is one-to-one
 on a set $A\subset\Sigma$ then  $\nu_\omega(\sigma A)=\int_Ae^{-f_\omega}\,d\nu_\omega(x)$.
If we replace $f$ by the normalised function
 $\tilde{f_\omega}=f_\omega+\log h_\omega-\log h_\omega\circ\sigma$
 then $\mu_\omega$ is $e^{-\tilde{f}_\omega}$-conformal. We now assume that
 $\mu_\omega$ is $e^{-\tilde{f}_\omega}$-conformal for every $\omega$.
 Moreover, if $U(\alpha)$ is the $n$-cylinder which is determined by the 
 $n$-word $\alpha$, then
 $$
 \mu_\omega(U(\alpha))=\int\chi_{U(\alpha)}\,d\mu_\omega
 =\int\mathcal{L}_\omega^n\chi_{U(\alpha)}\,d\mu_\omega
 =\int e^{f_\omega^n(\alpha x)}\,d\mu_\omega(x),
 $$
 where $\mathcal{L}_\omega$ is the transfer operator for  the normalised function $f_\omega$,
 $f_\omega^n=\sum_{j=0}^{n-1}f\circ\sigma$ is the $n$-th ergodic sum of $f_\omega$
 and $\alpha x$ is the concatenation of $\alpha$ and $x$ (subject to the transition
 rules).

Let $\mu=\int\mu_\omega\,d\PP(\omega)$ be the marginal measure. We now want
to establish that $\mu$ is $\psi$-mixing
and for this purpose assume that the family of functions $\{f_\omega\}_{\omega\in \Omega}$ satisfies the following 
additional regularity assumption with respect to $\omega$: Assume there
exists a constant $K$ so that for every $n$:
$$
|f_\omega-f_{\omega'}|_\infty\le K\hat\kappa^n
$$
for $\omega,\omega'\in\Omega$ for which $\omega_i=\omega'_i\forall |i|\le n$.
The supremum norm is over $\Sigma$.

Let $\alpha$ be an $n$-word in $\Sigma$ and denote by $V=U(\alpha)\subset\Sigma$
the associated $n$-cylinder. Similarly for an $m$-word $\beta$ we write $W=U(\beta)$
for the associated $m$-cylinder.
Then 
\begin{eqnarray*}
\mu(V\cap\sigma^{-n-k}W)
&=&\int_\Omega\mu_\omega(V\cap\sigma^{-n-k}W)\,d\PP(\omega)\\
&=&\int_\Omega\mu_\omega(V)\mu_{\theta^{-n-k}\omega}(W)(1+\mathcal{O}(\kappa^k))\,d\PP(\omega)\\
&=&(1+\mathcal{O}(\kappa^k))\int_\Omega
\int_\Sigma e^{f_\omega^n(\alpha x)}\,\mu_\omega(x)\,
\int_\Sigma e^{f_{\theta^{-n-k}\omega}^m(\beta y)}\,d\mu_{\theta^{-n-k}\omega}(y)\,d\PP(\omega).
\end{eqnarray*}
Define $\omega^{(n,k)}(\omega)\in\Omega$ by putting $\omega^{(n,k)}_i=\omega_i$ 
for $i\le n+\frac{k}2$ and $\omega^{(n,k)}_i=\omega_{i-(n+\frac{k}2)}$ for $i>n+\frac{k}2$.
This implies for all $x\in\Sigma$
$$
f_\omega^n(\alpha x)-f_{\omega^{(n,k)}}^n(\alpha x)=\mathcal{O}(\kappa^{\frac{k}2}),
$$
and for all $y$
$$
f_{\theta^{-n-k}\omega}^m(\beta y)-f_{\theta^{-n-k}\omega^{(n,k)}}^m(\beta y)=\mathcal{O}(\kappa^\frac{k}2).
$$
Hence 
\begin{eqnarray*}
\mu(V\cap\sigma^{-n-k}W)\hspace{-3cm}&&\\
&=&(1+\mathcal{O}(\hat\kappa^\frac{k}2))\int_\Omega
\int_\Sigma e^{f_{\omega^{(n,k)}}^n(\alpha x)}\,\mu_\omega(x)\,
\int_\Sigma e^{f_{\theta^{-n-k}\omega^{(n,k)}}^m(\beta y)}\,d\mu_{\theta^{-n-k}\omega}(y)\,d\PP(\omega)\\
&=&(1+\mathcal{O}(\hat\kappa^\frac{k}2))\int_\Omega
\int_\Sigma e^{f_{\omega^{(n,k)}}^n(\alpha x)}\,\mu_\omega(x)\,d\PP(\omega)
\int_\Omega\int_\Sigma e^{f_{\theta^{-n-k}\omega^{(n,k)}}^m(\beta y)}\,d\mu_{\theta^{-n-k}\omega}(y)\,d\PP(\omega).
\end{eqnarray*}
Hence replacing the the modified $\omega^{(n,k)}$ again by $\omega$ we obtain
$$
\int_\Omega
\int_\Sigma e^{f_{\omega^{(n,k)}}^n(\alpha x)}\,\mu_\omega(x)\,d\PP(\omega)
=(1+\mathcal{O}(\hat\kappa^\frac{k}2))\int_\Omega
\int_\Sigma e^{f_{\omega}^n(\alpha x)}\,\mu_\omega(x)\,d\PP(\omega)
=(1+\mathcal{O}(\hat\kappa^\frac{k}2))\mu(U(\alpha)),
$$
and similarly
$$
\int_\Omega\int_\Sigma e^{f_{\theta^{-n-k}\omega^{(n,k)}}^m(\beta y)}\,d\mu_{\theta^{-n-k}\omega}(y)
=(1+\mathcal{O}(\hat\kappa^\frac{k}2))\mu(U(\beta)).
$$
We thus obtain 
$$
\mu(V\cap\sigma^{-n-k}W)=(1+\mathcal{O}(\kappa'^{k}))\mu(V)\mu(W),
$$
that is, the marginal measure $\mu$ is $\psi$-mixing at rate $\kappa'=\max\{\kappa, \sqrt{\hat\kappa}\}$.

Let us note that condition~(iii) is  satisfied since $\mu$ is $\psi$-mixing.

Thus, if $x$ is a periodic point with minimal period $m$, then with $\alpha=x_0\cdots x_{m-1}$,
\begin{eqnarray*}
\mu(A_{n+m}(x))
&=&\int_\Omega\int_\Sigma\chi_{A_{n+m}(x)}\,d\mu_\omega\,d\PP(\omega)\\
&=&\int_\Omega\int_\Sigma e^{f_\omega^m(\alpha y)}\chi_{A_{n}(x)}(y)\,d\mu_\omega(y)\,d\PP(\omega)\\
&=&(1+\mathcal{O}(\kappa^n))
\int_\Omega e^{f_\omega^m(x)}\int_\Sigma \mu_\omega(A_n(x)))\,d\PP(\omega).
\end{eqnarray*}
In particular the limit
$$
\vartheta=\lim_{n\to\infty}\frac{\mu(A_{n+m}(x))}{\mu(A_n(x))}
$$
exists and converges exponentially: 
$\frac{\mu(A_{n+m}(x))}{\mu(A_n(x))}=\vartheta+\mathcal{O}(\kappa^n)$.
We can then choose $\delta(n)$ to be proportional to 
$|\log \mu(A_n(x))|$, or equivalently a multiple of $n$, and obtain the following result.

\begin{theorem} Let $(\Sigma, \sigma)$ be an Axiom A system and
 $\{\mu_\omega\}_{\omega\in \Omega}$ be a family of equilibrium states for H\"older continuous potentials $\{f_\omega\}_{\omega\in \Omega}$
whose H\"older norms are uniformly bounded. We moreover assume that the family of functions
$\{f_\omega\}_{\omega\in \Omega}$ is H\"older continuous in $\omega$.

If $x\in\Sigma$ is periodic with minimal period $m$, then the value 
$\vartheta=\lim_{n\rightarrow\infty}\frac{\mu(A_{n+m}(x))}{\mu(A_n(x))}$ exists. 
Moreover for every parameter value $t>0$ and $r=0,1,\dots$
 one has
$$
\mathbb{P}(\zeta_n^x=r)\to e^{-t}P_r(\Theta t,\vartheta)
$$
as $n\to\infty$ ($\Theta=1-\vartheta$).
\end{theorem}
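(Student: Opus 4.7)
The plan is to reduce the theorem to a direct application of Theorem~\ref{psi-mixing} by verifying the four hypotheses (i)--(iv) for the random Gibbs family, confirming existence of the limit $\vartheta$, and then passing from the quenched conclusion to the stated conclusion by dominated convergence in $\omega$.

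First I would dispose of conditions~(i) and~(iv), which are immediate: the uniform bound on the H\"older norms of $\{f_\omega\}$ yields uniform Gibbs constants for the sample measures $\mu_\omega$, hence $\psi$-mixing at a rate $\kappa\in(0,1)$ independent of $\omega$ (so the decay hypothesis $\psi(k)k^q\to 0$ needed in Theorem~\ref{psi-mixing} is trivially satisfied), together with a uniform upper bound on $1$-cylinder measures. Condition~(ii) is exactly what was derived in the paragraphs preceding the theorem: using the H\"older-in-$\omega$ assumption to splice $\omega$ into a modified element $\omega^{(n,k)}$ whose past and future are $\PP$-independent, and invoking $\theta$-invariance of $\PP$, one obtains $\psi$-mixing of the marginal $\mu$ at rate $\kappa'=\max\{\kappa,\sqrt{\hat\kappa}\}$. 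Condition~(iii) then follows automatically from exponential $\psi$-mixing of $\mu$ combined with the uniform lower bound on single-symbol probabilities.

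The third step is existence of $\vartheta$. The identity
$$
\mu_\omega(A_{n+m}(x))=e^{f_\omega^m(x)}(1+\mathcal{O}(\hat\kappa^n))\,\mu_\omega(A_n(x)),
$$
a consequence of conformality of $\mu_\omega$ and the $m$-periodicity of $x$, is integrated against $\PP$, after which the same splicing trick used for~(ii) decorrelates the factor $e^{f_\omega^m(x)}$ from $\mu_\omega(A_n(x))$ to give
$$
\frac{\mu(A_{n+m}(x))}{\mu(A_n(x))}\longrightarrow \vartheta:=\int_\Omega e^{f_\omega^m(x)}\,d\PP(\omega)
$$
with exponential rate, which is precisely~\eqref{eq:vartheta}.

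With every hypothesis of Theorem~\ref{psi-mixing} verified, that theorem yields the quenched limit $\mu_\omega(\zeta_n^x=r)\to e^{-t}P_r(\Theta t,\vartheta)$ for $\PP$-a.e.\ $\omega$; integrating in $\omega$ and using dominated convergence (the integrand lies in $[0,1]$) gives the stated annealed limit $\mu(\zeta_n^x=r)\to e^{-t}P_r(\Theta t,\vartheta)$. I expect the only genuinely technical ingredient to be the $\omega$-splicing argument, which does double duty in verifying~(ii) and establishing $\vartheta$; once that is in place, the remaining work is essentially bookkeeping and one direct appeal to Theorem~\ref{psi-mixing}.
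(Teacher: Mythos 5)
Your overall architecture matches the paper's: conditions (i) and (iv) from the uniform Gibbs property, condition (ii) for the marginal $\mu$ via the splicing construction $\omega^{(n,k)}$, condition (iii) as a consequence, then a single appeal to Theorem~\ref{psi-mixing} with $\delta(n)$ a suitable multiple of $n$, and dominated convergence in $\omega$ to pass from the quenched to the stated annealed limit. All of that is sound and is exactly how the paper proceeds.

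The genuine gap is in your treatment of $\vartheta$. You claim that after integrating the conformality identity one can ``decorrelate'' $e^{f_\omega^m(x)}$ from $\mu_\omega(A_n(x))$ by the same splicing trick and conclude $\vartheta=\int_\Omega e^{f_\omega^m(x)}\,d\PP(\omega)$. But the splicing argument only decouples quantities depending on blocks of $\omega$-coordinates separated by a gap $k\to\infty$; here $e^{f_\omega^m(x)}$ depends on the coordinates of $\omega$ near $0$ while $\mu_\omega(A_n(x))$ (or $\mu_{\theta^m\omega}(A_n(x))$ after peeling off the first $m$ symbols) depends on $\omega_m,\dots,\omega_{n+m}$ --- adjacent blocks with no gap --- so the trick gives nothing. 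What conformality actually yields is
$$
\frac{\mu(A_{n+m}(x))}{\mu(A_n(x))}=\left(1+\mathcal{O}(\kappa^n)\right)\int_\Omega e^{f_\omega^m(x)}\,\frac{\mu_\omega(A_n(x))}{\mu(A_n(x))}\,d\PP(\omega),
$$
a \emph{weighted} average of $e^{f_\omega^m(x)}$ against the densities $\rho_n(\omega)=\mu_\omega(A_n(x))/\mu(A_n(x))$. Convergence of this weighted average still has to be argued (the existence of $\vartheta$ is part of what the theorem asserts; the paper obtains it from this identity), and the limit equals the plain average $\int e^{f_\omega^m(x)}\,d\PP$ only when the weights decouple from $e^{f_\omega^m(x)}$, as happens in the i.i.d.-driven Bernoulli examples. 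So your explicit formula for $\vartheta$ is unjustified and in general incorrect, and the existence of the limit is not actually established by your argument; everything downstream of this point is fine.
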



\end{document}